\renewcommand{\baselinestretch}{1}
\newtheorem{prethm}{{\bf Theorem}}
\newenvironment{thm}{\begin{prethm}{\hspace{-0.5
               em}{\bf.}}}{\end{prethm}}
\newtheorem{prepro}[prethm]{Proposition}
\newtheorem{prelem}[prethm]{Lemma}
\newenvironment{lem}{\begin{prelem}{\hspace{-0.5
               em}{\bf.}}}{\end{prelem}}
\newtheorem{precor}[prethm]{Corollary}
\newtheorem{preque}[prethm]{Question}
\newenvironment{que}{\begin{preque}{\hspace{-0.5
               em}{\bf.}}}{\end{preque}}
\newtheorem{precon}[prethm]{Conjecture}
\newtheorem{preremark}{{\bf Remark}}
\newtheorem{preexample}{{\bf Example}}
\newenvironment{example}{\begin{preexample}\em{\hspace{-0.5
               em}{\bf.}}}{\end{preexample}}
\newtheorem{preproblem}{{\bf problem}}
\newtheorem{preproof}{{\bf Proof.}}
\newenvironment{proof}[1]{\begin{preproof}{\rm
               #1}\hfill{$\Box$}}{\end{preproof}}
\renewcommand{\thefootnote}
\begin{document}
\title{Certain Simple Maximal Subfields in Division Rings}
\author{M. Aaghabali$^{\,\rm a}$, M.H. Bien$^{\,\rm b}$\\
{\footnotesize{\em $^{\rm a}$ School of Mathematics, University of Edinburgh,
Edinburgh EH9 3JZ, Scotland}}\\
{\footnotesize {\em $^{\rm b}$Faculty of Mathematics and Computer Science,
University of Science, VNU-HCM,
227 Nguyen Van Cu Str.,}}\\
{\footnotesize{\em  Dist. 5, HCM-City, Vietnam}}}
\footnotetext{E-mail Addresses: {\tt maghabali@gmail.com}, {\tt mhbien@hcmus.edu.vn}\\ The first author's current institute: School of Mathematics, Statistics and Computer Science, University of Tehran, Tehran, Iran}
\date{}
\maketitle
%------------------------------------------------------------------------------------------------------------------------------
\begin{quote}
{\small \hfill{\rule{13.3cm}{.1mm}\hskip2cm} \textbf{Abstract}
Let $D$ be a division ring finite dimensional over its center $F$. The goal of this paper is to prove that for any positive integer $n$ there exists $a\in D^{(n)},$ the $n$-th multiplicative derived subgroup, such that $F(a)$ is a maximal subfield of $D$. We also show that a single depth-$n$ iterated additive commutator would generate a maximal subfield of $D.$
\vspace{1mm} {\renewcommand{\baselinestretch}{1}
\parskip = 0 mm

\noindent{\small {\it AMS Classification}: 16K20, 16R50, 17A35.}}

\noindent{\small {\it Keywords}: Division Ring, Rational Identity, Maximal Subfield.}}

\vspace{-3mm}\hfill{\rule{13.3cm}{.1mm}\hskip2cm}
\end{quote}
%-------------------------------------------------------------------------------------------------------------------------------
%================================================================================================================================
\section{Preliminary}
  Throughout this paper $D$ is a division ring with center $F.$  An element $a\in D$ is called \textit{algebraic}   over $F$, if there exists a non-zero polynomial $a_0+a_1x+\dots+a_nx^n$ over $F$ such that $a_0+a_1a+\dots+a_na^n=0.$ If $a\in D$, then $F(a)$ denotes the subfield of $D$  generated by $F$ and $\{a\}$. For a (multiplicative) group $G$ we denote by $(a,b)=aba^{-1}b^{-1}$ the multiplicative commutator of $a,b\in G$ and $(G,G)$ the multiplicative commutator subgroup of $G$. We denote by $G\supseteq G'\supseteq \dots \supseteq G^{(n)}\supseteq\dots$ the derived series of $G$, that is, $G'=(G,G)$, and $G^{(n+1)}=(G^{(n)},G^{(n)})$ for every $n\ge 1$. For a unital associative ring $R$ we use $[a,b]=ab-ba$ to denote the additive commutator of $a,b\in R$ and $R_1=[R,R]$ the additive commutator subgroup of $R$. We denote by $R\supseteq R_1\supseteq \dots \supseteq R_n\supseteq\dots$ the additive derived series of $R$, that is, $R_1=[R, R]$, and $R_{n+1}=[R_n,R_n]$ for every $n\ge 1$. For a given division ring $D$ we call $D^{(n)}$ and $D_n$ the $n$-th multiplicative and additive derived groups of $D,$ respectively. In the case of division ring $D$ we simply use $D'$ and $[D,D]$ to denote the multiplicative and additive group of commutators in $D,$ respectively.  If $A$ is a subset of $D$ we use $A^*$ to denote $A\setminus\{0\}.$ A subfield $K$ of $D$ is called a {\it maximal subfield} if $K$ is its own centralizer in $D^*.$ We denote by ${\rm dim}_FD$ the dimension of $D$ over $F.$ If ${\rm dim}_FD=n^2,$ then $n$ is called the {\it degree} of division ring $D.$ By $M_n(K), GL_n(K)$ and $SL_n(K)$ we mean all square matrices, all invertible matrices and all matrices of determinant one of order $n$ with entries from $K,$ respectively.

 Mahdavi-Hezavehi in~\cite{mahdavi1} investigates the algebraic properties of the multiplicative group of commutators in a division ring and shows that any subfield $K$ of a division ring $D$ which is separable over the center of $D$ is generated over the center by a commutator subgroup of $D'$. Afterwards, Mahdavi-Hezavehi and his colleagues in~\cite{mahdavi2} studied other generating properties of commutator subgroup and showed that each finite separable extension of the center of $D$ could be considered as a simple extension $F(c),$ where $c$ is an element in $D'.$ Now, it is natural to consider similar questions in terms of some other elements coming from certain substructures of a division ring. In particular, one can pose the following questions:
 \begin{que} {\rm\cite[Problems 28,~29]{m}} {\rm Let $D$ be a division ring finite dimensional over its center $F.$

 (i) Whether for any non-central normal subgroup $N$ of $D^*$ does there exist element $c\in N$ such that $F(c)$ is a maximal subfield in $D?$

 (ii) Whether for any non-central subnormal subgroup $N$ of $D^*$ does there exist element $c\in N$ such that $F(c)$ is a maximal subfield in $D?$}
 \end{que}
 In this note we rely on rational identities to show that some maximal subfields are generated by elements coming from $D^{(n)}$ and $D_n,$ resp. $n$-th derived subgroup of $D^*$ and $n$-th iterated group of additive commutators, for any positive integer $n.$ These fall under a wider class of problems concerning the question of whether a non-central subnormal subgroup of $D^*$ cannot be ``too small", and questions about the images of (non-commutative) polynomials evaluated on central simple algebras. In the case $n= 1,$ both results have been proved Chebotar et al in~\cite[Theorem 3, theorem 6]{chebo}, and recently again by the authors and S. Akbari in~\cite[Theorem 6, Theorem 7]{aa}.
Both \cite{aa, chebo} and the current paper use rational polynomial identities in proving the aforementioned results.  The idea is simple and clever:  The key is a certain (non-commutative) polynomial $g_n(x, y_1,\dots , y_n)$ that vanishes whenever an algebraic element of degree $\leq n$ is substituted into $x.$  One takes $n<deg D,$ substitutes a relevant rational expression into $x$ and proves that the resulting expression cannot vanish on $D\otimes_FL,$ where $L$ is some splitting field of $D.$  In~\cite{aa} and~\cite{chebo}, the expressions substituted into $x$ are single additive, resp. multiplicative, commutators on two variables, whereas here, iterated commutators are considered.

\section{Rational identities}
	Let $F$ be a field and $X=\{x_1,\dots,x_m\}$ be $m$ noncommuting indeterminates. Denote by $F\langle X\rangle$ and $F(X)$ respectively the free algebra in $X$ over $F$ and the universal division ring of fractions of $F\langle X\rangle$. A {\it rational expression} over $F$ is an element of $F(X)$. Let $R$ be an $F$-algebra. A rational expression $f$ over $F$ is said to be a {\it rational identity} of $R$ if it vanishes on all permissible substitutions from $R$. In this case, we say that $R$ {\it satisfies the rational identity} $f=0$.
\vspace{-.2cm}	
	\begin{example}{\rm  \begin{enumerate}
				\item It is not hard to see that (Hua's identity) $(x^{-1} +(y^{-1} -x^{-1})^{-1})^{-1}-x +xyx=0$ is a rational identity of every algebra over an arbitrary field $F$.
\vspace{-.2cm}
				\item One can easily verify that $(x+y)^{-1}-y^{-1}(x^{-1}+y^{-1})^{-1}x^{-1}=0$ is a rational identity of every algebra over an arbitrary field $F$.
\vspace{-.2cm}
				\item It is easy to check that $((x, (y,z)x(y,x)^{-1})^3, z)=0$ vanishes on permissible substitutions of $M_3(F)$ for any field $F$.
			\end{enumerate}
		}\end{example}
\vspace{-.2cm}
		
	A rational identity $f$ of an algebra $R$ is called {\it nontrivial}\index{nontrivial} if $f$ is non-zero in $F(X)$ \cite{chebo}. In the special case when $R=D$ is a division ring, we have some further information: assume that $f=0$ is a rational identity of $D$. Then $f$ is non-trivial if and only if there exists a division ring $L$ containing all coefficients of $f$ and $f$ is not a rational identity of $L$. One direction of the statement is trivial, to see the other direction assume that $f$ is nontrivial. Then, it is well known that there exists a division ring $L$ with infinite center which contains $F$, and $L$ is infinite dimensional over its center. Hence by \cite{chiba}, $f=0$ is not a rational identity of $D$.  In the example, it is easily seen that (1) and (2) are trivial, however one can verify that (3) is nontrivial.
	
	In this paper, our algebras $R$ are central simple algebras over a field $F$. That is,  $R\cong M_n(D)$ where $D$ is a division ring which is finite dimensional over $F$. We denote by $\mathcal{I}(R)$ the set of all nontrivial rational identities of the algebra $R$. It is known that a division ring $D$ with infinite center $F$ is a finite dimensional vector space over its center  if and only if $\mathcal{I}(D)\ne \emptyset$ \cite{chiba}. Therefore, there are rings $R$ with $\mathcal{I}(R)=\emptyset.$ Moreover,
	
	\begin{thm}\label{t2.2}{\rm \cite[Theorem 11]{Pa_Am_66}} Let $F$ be an infinite field and $R$ be a central simple $F$-algebra with ${\rm dim}_FR=n^2$. Assume that $L$ is a field extension of $F$. Then $\mathcal{I}(R)=\mathcal{I}(M_n(F))=\mathcal{I}(M_n(L))$.
	\end{thm}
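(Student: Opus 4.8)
The plan is to evaluate a nontrivial rational expression on a single universal (``generic'') substitution, descend from it to arbitrary rational substitutions by Zariski density over the infinite field $F$, and move between $R$, $M_n(\overline{F})$ and $M_n(L)$ by exploiting that all of these are forms of the matrix algebra. Fix $m=|X|$. Since $R$ is central simple of dimension $n^2$ over $F$ it splits over $\overline{F}$, so $R\otimes_F\overline{F}\cong M_n(\overline{F})$ and there is an $F$-algebra embedding $R\hookrightarrow M_n(\overline{F})$ whose image $\overline{F}$-spans $M_n(\overline{F})$. Two inclusions are then immediate and need no hypothesis on $F$: if $f\in F(X)$ is a nontrivial rational identity of $M_n(K)$ for a field $K\supseteq F$, then, since $M_n(F)\subseteq M_n(K)$ and invertibility over $F$ persists over $K$, every permissible substitution from $M_n(F)$ is one from $M_n(K)$ with the same value, so $\mathcal{I}(M_n(K))\subseteq\mathcal{I}(M_n(F))$; the same argument through $R\hookrightarrow M_n(\overline{F})$ gives $\mathcal{I}(M_n(\overline{F}))\subseteq\mathcal{I}(R)$.

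For the reverse inclusions I would introduce $n^2m$ independent commuting indeterminates with corresponding generic matrices $B_1,\dots,B_m$, and call $f$ \emph{generically permissible} if $x_k\mapsto B_k$ is a permissible substitution over the field of fractions. Whether this holds depends only on whether finitely many explicit denominators --- the determinants $\det g(B_1,\dots,B_m)$ of the inverted subexpressions $g$ of $f$, built recursively --- are nonzero rational functions, and as these have coefficients in $F$ the property is unchanged when $F$ is enlarged. If $f$ is not generically permissible, pick a minimal inverted subexpression $g^{-1}$ for which $g(B_1,\dots,B_m)$ is defined but has identically zero determinant; then $g$ is singular at every point of $M_n(K)^m$, of $M_n(L)^m$ and of $R^m$ at which it is defined, so $f$ has no permissible substitution at all from any of these rings and the claimed equalities hold vacuously. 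If $f$ \emph{is} generically permissible, then $f(B_1,\dots,B_m)$ is a well-defined matrix whose entries are rational functions in the indeterminates \emph{with coefficients in $F$} (because $f$ has coefficients in $F$) and whose denominators are products of the $\det g(B_1,\dots,B_m)$. Two implications then finish this half: (i) if $f\in\mathcal{I}(M_n(F))$, specialising the indeterminates over the Zariski-dense set of $F$-points at which no denominator vanishes produces permissible substitutions from $M_n(F)$ killing $f$, so, $F$ being infinite, $f(B_1,\dots,B_m)=0$; (ii) conversely, once $f(B_1,\dots,B_m)=0$, any permissible substitution $(A_1,\dots,A_m)$ from $M_n(K)$ with $K\supseteq F$ is a specialisation of the $B_k$ at which the denominators survive, so $f(A_1,\dots,A_m)=0$ and $f\in\mathcal{I}(M_n(K))$. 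Applying (i) then (ii) gives $\mathcal{I}(M_n(F))\subseteq\mathcal{I}(M_n(K))$ for every $K\supseteq F$, hence $\mathcal{I}(M_n(F))=\mathcal{I}(M_n(L))=\mathcal{I}(M_n(\overline{F}))$, and with the first paragraph $\mathcal{I}(M_n(F))\subseteq\mathcal{I}(R)$.

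It remains to prove $\mathcal{I}(R)\subseteq\mathcal{I}(M_n(F))$, and here I would use a generic element of $R$: fixing an $F$-basis $e_1,\dots,e_{n^2}$ of $R$, set $Z_k=\sum_j t_j^{(k)}e_j$ with fresh commuting indeterminates $t_j^{(k)}$. Under $R\hookrightarrow M_n(\overline{F})$ extended over the $t$'s, each $Z_k$ becomes $\sum_{a,b}u_{ab}^{(k)}E_{ab}$ after an invertible $\overline{F}$-linear change of variables; since the $t_j^{(k)}$ stay algebraically independent over $\overline{F}$, so do the $u_{ab}^{(k)}$, i.e.\ the images of the $Z_k$ are exactly generic matrices over $\overline{F}$. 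Assuming $f$ generically permissible (the other case being vacuous as before), $x_k\mapsto Z_k$ is a permissible substitution into $R\otimes_F F(t)$ --- an element of a finite-dimensional $F(t)$-algebra being invertible precisely when its image in $M_n(\overline{F(t)})$ is --- and if $f\in\mathcal{I}(R)$ then specialising the $t$'s over a dense set of $F$-points keeping all reduced norms nonzero gives permissible substitutions from $R$ annihilating $f$, so $f(Z_1,\dots,Z_m)=0$ in $R\otimes_F F(t)$. Mapping into $M_n(\overline{F(t)})$ and using that the resulting matrix already has entries in $F(t)$, this says precisely $f(B_1,\dots,B_m)=0$ after the change of variables, and implication (ii) yields $f\in\mathcal{I}(M_n(F))$. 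Assembling the four inclusions gives $\mathcal{I}(R)=\mathcal{I}(M_n(F))=\mathcal{I}(M_n(L))$.

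The substance of the argument is bookkeeping rather than ideas: tracking permissibility through the recursion --- that the ``good'' specialisation loci are Zariski dense, that generic permissibility is insensitive to the base field, and that invertibility transfers correctly among $R$, $R\otimes_F F(t)$, $M_n(\overline{F})$ and the various generic models --- with everything downstream resting only on the elementary fact that a nonzero rational function over an infinite field cannot vanish on a Zariski-dense set of rational points. This is where I expect the main difficulty to lie.
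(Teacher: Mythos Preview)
The paper does not prove this theorem at all: it is quoted verbatim as \cite[Theorem~11]{Pa_Am_66} and used as a black box, so there is no ``paper's own proof'' to compare against. Your outline is essentially Amitsur's original argument---reduce to generic matrices, use Zariski density over the infinite base field to pass between the generic and the specific, and transfer between $R$ and $M_n$ via a splitting $R\otimes_F\overline{F}\cong M_n(\overline{F})$---so in spirit you are reconstructing exactly the cited source rather than offering an alternative route.

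One point deserves more care than you give it. Elements of the free skew field $F(X)$ are equivalence classes of formal rational expressions, and different representatives of the same element can have different domains of definition on $M_n(K)^m$; the notions ``permissible substitution'', ``generically permissible'', and ``minimal inverted subexpression that fails'' are properties of a chosen representative, not of the element $f\in F(X)$ itself. Your vacuous case (``$f$ has no permissible substitution at all'') therefore needs either a fixed representative throughout or the standard lemma that over an infinite field the domain of definition of any representative of a nonzero $f\in F(X)$ is Zariski dense in $M_n(K)^m$ whenever it is nonempty, and that two representatives agree on the overlap. Amitsur handles this by working with his ``$PI$-degree'' formalism for rational expressions; you should at least flag that this coherence is being used, since otherwise the inclusion $\mathcal{I}(R)\subseteq\mathcal{I}(M_n(F))$ could in principle depend on which formula for $f$ one evaluates.
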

	
	We consider the following example of a rational expression which is important in this paper. Given a positive integer $n$ and $n+1$ noncommutative indeterminates $x,y_1,\dots, y_n$, put $$g_n(x,y_1,\dots, y_n)=\sum\limits_{\delta  \in {S_{n + 1}}} {{\rm sign}(\delta ){x^{\delta (0)}}{y_1}{x^{\delta (1)}}{y_2}{x^{\delta (2)}} \dots {y_n}{x^{\delta (n)}}}, $$ where $S_{n+1}$ is the symmetric group of $\{\,0,\dots, n\,\}$ and ${\rm sign}(\delta)$ is the sign of permutation $\delta$. This is a rational expression defined in \cite{Bo_BeMaMi_96} as a mean to test whether an element is algebraic of degree $n.$ This rational expression may be considered as a generalisation of the characteristic polynomials of matrices of degree $n$ over a field.
	
\begin{lem}\label{2.2} Let $R=M_n(D)$ be a central simple algebra over its center $F$. For any element $a\in R$, the following conditions are equivalent.
\vspace{-.2cm}
		\begin{enumerate}
			\item The element $a$ is algebraic over $F$ of degree less than or equal to $n$.
\vspace{-.2cm}
			\item $g_n(a,r_1,r_2,\dots, r_n)=0$ for any $r_1, r_2,\dots, r_n\in R$.
		\end{enumerate}
	\end{lem}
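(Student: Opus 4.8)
The plan is to recognize $g_n(a,r_1,\dots,r_n)$ as the value at $(1,a,a^2,\dots,a^n)$ of an alternating multilinear ``generalized determinant''. Fix $r_1,\dots,r_n\in R$ and put
$$h(w_0,w_1,\dots,w_n)=\sum_{\delta\in S_{n+1}}{\rm sign}(\delta)\,w_{\delta(0)}r_1w_{\delta(1)}r_2\cdots r_nw_{\delta(n)},$$
an $F$-multilinear map $R^{n+1}\to R$ for which $g_n(a,r_1,\dots,r_n)=h(1,a,a^2,\dots,a^n)$. The first step is the key structural fact that $h$ is alternating in every characteristic: if $w_p=w_q$ for some $p\ne q$, then pairing each $\delta$ with $(p\,q)\circ\delta$ leaves the associated word literally unchanged (the labels $w_p,w_q$ get interchanged but are equal) while reversing its sign, so the sum cancels in fixed-point-free pairs. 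Since $F$ is central in $R$, scalar coefficients pull out of every word, so $h$ vanishes on any $F$-linearly dependent tuple $(w_0,\dots,w_n)$.

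Granting this, $(1)\Rightarrow(2)$ is immediate: if $a$ is algebraic over $F$ of degree at most $n$ then $1,a,\dots,a^n$ are $F$-linearly dependent, whence $g_n(a,r_1,\dots,r_n)=h(1,a,\dots,a^n)=0$ for every choice of $r_1,\dots,r_n$.

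For $(2)\Rightarrow(1)$ I would argue by contraposition: assuming $1,a,\dots,a^n$ are $F$-linearly independent, I produce $r_1,\dots,r_n\in R$ with $g_n(a,r_1,\dots,r_n)\ne 0$. First reduce to a split situation. Let $L$ be a splitting field of $R$, so $R\otimes_F L\cong M_N(L)$ with $N=n\deg D$; write $\bar a=a\otimes 1$. Linear independence over $F$ survives the extension, so $1,\bar a,\dots,\bar a^n$ are $L$-independent, hence the minimal polynomial of $\bar a$ has degree at least $n+1$, and by the theory of rational canonical form there is $v\in L^N$ with $v,\bar av,\dots,\bar a^n v$ linearly independent (in particular $N\ge n+1$). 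Set $e_j=\bar a^jv$ for $0\le j\le n$, extend to an $L$-basis of $L^N$ with dual basis $e_0^*,\dots,e_{N-1}^*$, and define rank-one operators $\bar r_j\in M_N(L)$, $1\le j\le n$, by $\bar r_j(x)=e_j^*(x)\,e_0$. Evaluating the word $\bar a^{\delta(0)}\bar r_1\bar a^{\delta(1)}\cdots\bar r_n\bar a^{\delta(n)}$ on $v$ from right to left, one gets $v\mapsto e_{\delta(n)}\mapsto e_n^*(e_{\delta(n)})e_0\mapsto\cdots$, which is $0$ unless $\delta(n)=n$; iterating, a nonzero outcome forces $\delta(j)=j$ for all $j\ge 1$, hence $\delta={\rm id}$. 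So only the identity permutation contributes and $g_n(\bar a,\bar r_1,\dots,\bar r_n)\,v=e_0=v\ne 0$, i.e. $g_n(\bar a,\bar r_1,\dots,\bar r_n)\ne 0$ in $R\otimes_F L$.

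Finally I descend. The map $(r_1,\dots,r_n)\mapsto g_n(\bar a,r_1,\dots,r_n)$ is $L$-multilinear on $(R\otimes_F L)^n$; since a fixed $F$-basis of $R$ is an $L$-basis of $R\otimes_F L$ and this map is not identically zero, it is nonzero on some tuple $(b_1,\dots,b_n)$ of such basis elements. Then $g_n(a,b_1,\dots,b_n)\otimes 1=g_n(\bar a,b_1,\dots,b_n)\ne 0$, and injectivity of $R\hookrightarrow R\otimes_F L$ yields $g_n(a,b_1,\dots,b_n)\ne 0$ with $b_1,\dots,b_n\in R$, as needed. I expect the main obstacle to be the choice of the operators $\bar r_j$ in $(2)\Rightarrow(1)$ — arranging that the whole signed sum over $S_{n+1}$ collapses to a single nonzero term — whereas the alternating/multilinear bookkeeping for $(1)\Rightarrow(2)$ and the scalar-extension descent are routine.
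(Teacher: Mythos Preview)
Your argument is correct. The paper does not actually give a proof here: it simply records that the lemma is a corollary of \cite[Corollary~2.3.8]{Bo_BeMaMi_96}, the Capelli--polynomial criterion for algebraicity in prime rings. What you have written is, in effect, an unpacking of that criterion in the finite-dimensional central simple case. Recognising $g_n(a,r_1,\dots,r_n)$ as the value of an alternating $F$-multilinear form at $(1,a,\dots,a^n)$ is exactly the classical reason for $(1)\Rightarrow(2)$, and your splitting-field computation with rank-one operators $\bar r_j=e_0\otimes e_j^*$ that collapse the signed sum to the identity permutation is the standard construction witnessing $(2)\Rightarrow(1)$. The cited reference is more general (it is stated for prime rings via the extended centroid, so in particular covers $M_n(D)$ with $D$ infinite-dimensional over its center), whereas your argument uses a splitting field and hence tacitly assumes $\dim_F R<\infty$; this is harmless, since the paper explicitly adopts the convention that its central simple algebras are finite-dimensional, and only such algebras occur in the applications.
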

	\begin{proof}{ It is just a corollary of \cite[Corollary 2.3.8]{Bo_BeMaMi_96}.}
	\end{proof}
	
\section{Subfields generated by the elements in the $n$-th multiplicative derived subgroup}	

Let $n$ be a positive integer and let $x_1,\dots,x_{2^n}$ be $2^n$ indeterminates. We will define a special rational polynomial  $u_n(x_1,\dots,x_{2^n})$ successively as follows: set $u_1(x_1,x_2)=(x_1,x_2)=x_1x_2x_1^{-1}x_2^{-1}$ and assume that  $u_{n-1}(x_1,\dots,x_{2^{n-1}})$ is defined. Then we put $$u_n(x_1,\dots,x_{2^n})=u_1(u_{n-1}(x_1,\dots,x_{2^{n-1}}),u_{n-1}(x_{2^{n-1}+1},\dots,x_{2^n})).$$ This polynomial relates to the solvability of a group: if $G$ is a solvable group of length $\le n$, that is $G^{(n)}=1$, then $u_n(a_1,\dots,a_{2^n})=1$ for every $a_1,\dots,a_{2^n}\in G$. In fact, we show the following result.
\begin{lem}\label{l3.1} Let $u_n$ be as above.  If $G$ is a group with (multiplicative) derived series $$G\supseteq G'\supseteq \dots\supseteq G^{(n)}\supseteq\dots,$$ then $u_n(a_1,\dots, a_{2^n})\in G^{(n)}$ for $a_1,\dots, a_{2^n}\in G$.
\end{lem}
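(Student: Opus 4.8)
The plan is to proceed by induction on $n$, which matches the recursive definition of $u_n$. The base case $n=1$ is immediate: $u_1(a_1,a_2)=(a_1,a_2)=a_1a_2a_1^{-1}a_2^{-1}\in (G,G)=G'=G^{(1)}$ by the very definition of the commutator subgroup.

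For the inductive step, assume the claim holds for $n-1$, i.e. that $u_{n-1}(b_1,\dots,b_{2^{n-1}})\in G^{(n-1)}$ for all $b_1,\dots,b_{2^{n-1}}\in G$. Given $a_1,\dots,a_{2^n}\in G$, set
\[
p=u_{n-1}(a_1,\dots,a_{2^{n-1}}),\qquad q=u_{n-1}(a_{2^{n-1}+1},\dots,a_{2^n}).
\]
By the inductive hypothesis, both $p$ and $q$ lie in $G^{(n-1)}$. Then by the definition of $u_n$ we have $u_n(a_1,\dots,a_{2^n})=u_1(p,q)=(p,q)=pqp^{-1}q^{-1}$, which is a commutator of two elements of $G^{(n-1)}$ and therefore lies in $(G^{(n-1)},G^{(n-1)})=G^{(n)}$ by the definition of the derived series. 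This completes the induction.

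There is essentially no obstacle here; the statement is a formal consequence of unwinding the recursive definitions of $u_n$ and of the derived series, and the only thing to be careful about is bookkeeping the index ranges $\{1,\dots,2^{n-1}\}$ and $\{2^{n-1}+1,\dots,2^n\}$ when invoking the inductive hypothesis on each half. (One should also note that each $u_k$ is a well-defined group word, since it is built only from the operations of multiplication and inversion, so every substitution from $G$ is permissible.) The lemma is a preparatory step whose real purpose is to guarantee that the specific element produced by the later rational-identity argument genuinely lies in $D^{(n)}$.
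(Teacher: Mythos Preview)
Your proof is correct and follows essentially the same approach as the paper: induction on $n$, with the base case given by the definition of $G'$ and the inductive step obtained by writing $u_n$ as a commutator of two values of $u_{n-1}$, which lie in $G^{(n-1)}$ by hypothesis. If anything, your write-up is slightly more explicit than the paper's own proof.
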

\begin{proof}{ We prove the lemma by induction on $n$. Assume that $G$ is a group and $a_1,a_2\in G$. One has $a_1a_2a_1^{-1}a_2^{-1}\in G'$, which implies that $u_1(a_1,a_2)\in G'$. Hence, the lemma holds for $u_1$ and for the group $G$. Assume that for every group $H$, $u_{n-1}(a_1,\dots,a_{2^{n-1}})\in H^{(n-1)}$ for every $a_1,\dots,a_{2^{n-1}}\in H$. We must prove that for every group $G~,u_{n}(b_1,\dots,b_{2^{n}})\in G^{(n)}$ for every $b_1,\dots,b_{2^n}\in G$. This follows immediately from the definitions of $u_n$ and $G^(n)$ by induction on $n.$}
\end{proof}

\begin{lem} \label{l3.2} Let $K$ be an infinite field and $m>1$. For any positive integer $n$ and every non-scalar matrix $C\in SL_m(K)$, there exist non-scalar matrices $A_1, \dots , A_{2^{n}}\in SL_m(K)$ such that $C=u_n(A_1, \dots , A_{2^{n}}).$
\end{lem}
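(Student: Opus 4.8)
The plan is to argue by induction on $n$, reducing the whole statement to the base case $n=1$. One preliminary remark disposes of the non-scalarity requirement once and for all: a scalar matrix is central in $GL_m(K)$, so if either argument of a multiplicative commutator $(\,\cdot\,,\,\cdot\,)$ is scalar then that commutator equals the identity. Hence, whenever a commutator has non-scalar value, both of its arguments must be non-scalar; since $u_n$ is built from nested copies of $u_1$, this forces every argument of $u_n$ to be non-scalar as soon as the value of $u_n$ is non-scalar. So it will be enough to realise a given non-scalar $C\in SL_m(K)$ as a value of $u_n$ on matrices of $SL_m(K)$, ignoring scalarity.

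\textbf{Base case $n=1$.} As $u_1(A_1,A_2)=(A_1,A_2)$, what is needed is that every non-scalar $C\in SL_m(K)$ be a single commutator of two elements of $SL_m(K)$. For $K$ infinite (in particular $|K|>3$) and $m\geq 2$ this is a classical theorem of R.~C.~Thompson on commutators in the special linear group: the commutator map $SL_m(K)\times SL_m(K)\to SL_m(K)$ is surjective, the exceptions to the general statement occurring only for $SL_2(\mathbb F_2)$ and $SL_2(\mathbb F_3)$, which are excluded here; see also \cite{chebo,aa} for the $n=1$ case of the program of this paper. For $m=2$ one may also argue by hand: conjugate $C$ to a convenient normal form inside $SL_2(K)$ and write down an explicit pair of non-scalar determinant-one matrices whose commutator is $C$, the hypothesis $|K|>3$ being exactly what lets the needed parameters be chosen.

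\textbf{Inductive step.} This will run as follows. Assume the statement for $n-1$ and let $C\in SL_m(K)$ be non-scalar. By the base case, $C=(P,Q)=u_1(P,Q)$ for some $P,Q\in SL_m(K)$, which are non-scalar by the preliminary remark. Applying the inductive hypothesis separately to $P$ and to $Q$ yields non-scalar $A_1,\dots,A_{2^{n-1}}\in SL_m(K)$ with $P=u_{n-1}(A_1,\dots,A_{2^{n-1}})$ and non-scalar $A_{2^{n-1}+1},\dots,A_{2^{n}}\in SL_m(K)$ with $Q=u_{n-1}(A_{2^{n-1}+1},\dots,A_{2^{n}})$. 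By the recursive definition of $u_n$,
$$u_n(A_1,\dots,A_{2^{n}})=u_1\bigl(u_{n-1}(A_1,\dots,A_{2^{n-1}}),\,u_{n-1}(A_{2^{n-1}+1},\dots,A_{2^{n}})\bigr)=(P,Q)=C,$$
and all $2^{n}$ matrices $A_i$ lie in $SL_m(K)$ and are non-scalar, which closes the induction.

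I expect the only substantive step to be the base case, namely the surjectivity of the commutator map on $SL_m(K)$ for infinite $K$; for this I would simply invoke Thompson's theorem, keeping the elementary $m=2$ computation in reserve as a self-contained alternative. Everything else is a formal unwinding of the recursion defining $u_n$ together with the triviality of any commutator involving a central matrix.
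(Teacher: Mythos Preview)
Your argument is correct and matches the paper's proof essentially line for line: induction on $n$, with the base case supplied by Thompson's theorem that every non-scalar element of $SL_m(K)$ is a single commutator (citing \cite{thom}), and the inductive step obtained by first writing $C=u_1(B_1,B_2)$ and then expressing each $B_i$ via $u_{n-1}$. Your preliminary remark that non-scalarity of the value forces non-scalarity of all inputs is a nice explicit justification that the paper leaves implicit.
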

\begin{proof}{We show the lemma by induction on $n$. Assume that $n=1$. It is well-known that every non-scalar matrix in $SL_m(K)$ is a single commutator \cite{thom}. Hence there exist non-scalar matrices $A_1,A_2\in SL_m(K)$ such that $C=A_1A_2A_1^{-1}A_2^{-1}$. Thus, the statement holds in case $n=1$.
	 Assume that the statement is true for $n-1$, that is, for every non-scalar matrix $C\in SL_m(K)$ there exist non-scalar matrices $A_1, \dots , A_{2^{n-1}}\in SL_m(K)$ such that $C=u_{n-1}(A_1, \dots , A_{2^{n-1}}).$ Now by the induction hypothesis for every non-scalar matrix $C,$ there exist non-scalar matrices $B_1,B_2,A_1,\dots,A_{2^n}\in SL_m(K)$ such that  $C=u_1(B_1,B_2),~B_1=u_{n-1}(A_1,\dots,A_{2^{n-1}}),$ and $B_2=u_{n-1}(A_{2^{n-1}+1},\dots,A_{2^n}).$ Therefore, $$\begin{array}{ccc}C=u_1(B_1,B_2)\\~~~~~~~~~~~~~~~~~~~~~~~~~~~~~~~~~~~~~~~~~~~~~~~~~~~~~~=u_1(u_{n-1}(A_1,\dots,A_{2^{n-1}}),u_{n-1}(A_{2^{n-1}+1},\dots,A_{2^n}))\\
~~~~~~~~~~~=u_n(A_1,\dots,A_{2^{n}}).\end{array}$$ This implies that the statement is true in case $n$.} 	
\end{proof}

Before showing the main result of this section, we recall the following well-known lemma.
\begin{lem}\label{l3.3}{\rm \cite[Page 242]{lam}}
	Let $D$ be a division ring with center $F$ and $K$ be a subfield of $D$ containing $F$. If ${\rm dim}_FD=m^2,$ then ${\rm dim}_FK\le m$. The equality holds if and only if $K$ is a maximal subfield of $D$.
\end{lem}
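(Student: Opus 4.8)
The plan is to obtain the whole statement from the Centralizer Theorem (Double Centralizer Theorem) for central simple algebras. Since $D$ is a division ring finite dimensional over its center $F$, it is a central simple $F$-algebra, and since $F\subseteq K$ the field $K$ is a commutative, hence simple, $F$-subalgebra of $D$. Applying the Centralizer Theorem to the pair $K\subseteq D$ gives that the centralizer $C_D(K)=\{d\in D : dk=kd\text{ for all }k\in K\}$ is a simple $F$-algebra and that ${\rm dim}_FD={\rm dim}_FK\cdot {\rm dim}_FC_D(K)$; writing $s={\rm dim}_FK$ this reads $m^2=s\cdot {\rm dim}_FC_D(K)$.

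Next I would record two elementary facts about $C_D(K)$. First, $C_D(K)$ is again a division ring: it is a subring of $D$, and if $0\neq c\in C_D(K)$ then $c^{-1}$ also commutes with $K$. Second, $K$ lies in the center of $C_D(K)$, because $K$ is commutative and every element of $C_D(K)$ centralizes $K$; hence $C_D(K)$ is a finite dimensional $K$-vector space and ${\rm dim}_FC_D(K)=s\cdot {\rm dim}_KC_D(K)\ge s$. Substituting into the displayed identity yields $m^2=s\cdot {\rm dim}_FC_D(K)\ge s^2$, that is, ${\rm dim}_FK=s\le m$, which is the first assertion. For the equality case: if $s=m$ then ${\rm dim}_FC_D(K)=m^2/s=m=s$, forcing ${\rm dim}_KC_D(K)=1$ and hence $C_D(K)=K$; conversely, if $C_D(K)=K$ then ${\rm dim}_FC_D(K)=s$ and $m^2=s^2$, so $s=m$. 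Finally, $C_D(K)=K$ says precisely that the centralizer of $K$ in $D^*$ is $K^*$ (the nonzero elements of the division ring $C_D(K)$ are exactly those of $K$), i.e.\ that $K$ is a maximal subfield of $D$ in the sense recalled in Section~1.

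I do not expect a genuine obstacle here, since essentially all the content is packaged in the Centralizer Theorem and the remainder is bookkeeping with dimensions — which is why the statement is simply quoted from \cite{lam}. If one insisted on avoiding that theorem, the only step requiring real work would be the multiplicativity ${\rm dim}_FD={\rm dim}_FK\cdot {\rm dim}_FC_D(K)$; this can be proved directly by viewing $D$ as a right $K$-vector space, embedding $D$ into ${\rm End}_K(D)\cong M_{m^2/s}(K)$ via left multiplications, and identifying $C_D(K)$ with the centralizer of that image together with a Double Centralizer computation inside the matrix algebra — but this route is longer and less transparent than citing the known result.
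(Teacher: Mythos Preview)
Your argument via the Double Centralizer Theorem is correct and is precisely the standard proof one finds in the cited reference; the paper itself gives no proof of this lemma but merely quotes it from \cite[Page 242]{lam}. There is nothing to compare against, and your write-up matches the textbook approach.
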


\begin{thm}\label{t3.4} Let $D$ be a  division ring finite dimensional over a its center $F.$ For any positive integer $n$ there exists $a\in D^{(n)},$ the $n$-th multiplicative derived subgroup, such that $F(a)$ is a maximal subfield of $D$.
\end{thm}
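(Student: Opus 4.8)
The plan is to adapt the rational-identity method of \cite{aa,chebo}, replacing the single commutator used there by the iterated multiplicative commutator $u_n$. Write $m$ for the degree of $D$, so $\dim_FD=m^2$. If $m=1$, then $D=F$ and the subfield $F=F(1)$ --- with $1\in D^{(n)}$ --- is its own centralizer in $D^*$, hence a maximal subfield; so I may assume $m\ge 2$, in which case $D$ is noncommutative and therefore $F$ is infinite. The rational expression I will work with is
$$f(x_1,\dots,x_{2^n},y_1,\dots,y_{m-1})\ :=\ g_{m-1}\bigl(u_n(x_1,\dots,x_{2^n}),\,y_1,\dots,y_{m-1}\bigr),$$
whose coefficients lie in the prime field, hence in $F$; here $u_n$ evaluates to a nonzero (invertible) element under every permissible substitution, and by Lemma~\ref{l3.1} its value always lies in the $n$-th derived subgroup of the ambient group.

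I would argue by contradiction, assuming that $F(a)$ fails to be a maximal subfield of $D$ for every $a\in D^{(n)}$. By Lemma~\ref{l3.3} this means every element of $D^{(n)}$ is algebraic over $F$ of degree $\le m-1$. The first step is to deduce that $f$ is then a rational identity of $D$: given a permissible substitution $x_i\mapsto a_i\in D^*$, $y_j\mapsto r_j\in D$, put $a:=u_n(a_1,\dots,a_{2^n})\in D^{(n)}$; the diagonal matrix $aI_{m-1}\in M_{m-1}(D)$ has the same minimal polynomial over $F$ as $a$, so it is algebraic over $F$ of degree $\le m-1$, and Lemma~\ref{2.2} applied to the central simple algebra $M_{m-1}(D)$ gives $g_{m-1}(aI_{m-1},R_1,\dots,R_{m-1})=0$ for all $R_i\in M_{m-1}(D)$; specializing $R_i=r_iI_{m-1}$ and reading off a diagonal entry yields $g_{m-1}(a,r_1,\dots,r_{m-1})=0$, as needed.

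The second step is to show that, on the contrary, $f$ is nontrivial and is not a rational identity of $M_m(F)$. Since $F$ is infinite, I can choose distinct $\lambda_1,\dots,\lambda_m\in F^*$ with $\lambda_1\cdots\lambda_m=1$ and set $C:=\operatorname{diag}(\lambda_1,\dots,\lambda_m)\in SL_m(F)$, a non-scalar matrix that is algebraic over $F$ of degree exactly $m$. A short computation then shows $g_{m-1}(C,E_1,\dots,E_{m-1})$ to be a nonzero multiple of a matrix unit for an appropriate choice of matrix units $E_i\in M_m(F)$, the coefficient being a Vandermonde determinant in the $\lambda_i$ (this is also a special case of \cite[Corollary~2.3.8]{Bo_BeMaMi_96}). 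By Lemma~\ref{l3.2}, applied with the infinite field $F$ and $m>1$, there are invertible matrices $A_1,\dots,A_{2^n}\in SL_m(F)$ with $C=u_n(A_1,\dots,A_{2^n})$, so
$$f(A_1,\dots,A_{2^n},E_1,\dots,E_{m-1})=g_{m-1}(C,E_1,\dots,E_{m-1})\ne 0,$$
a permissible substitution. Hence $f\notin\mathcal{I}(M_m(F))$, and in particular $f$ is nontrivial.

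Finally I would combine the two steps: by Theorem~\ref{t2.2} (valid because $F$ is infinite and $\dim_FD=m^2$) one has $\mathcal{I}(D)=\mathcal{I}(M_m(F))$, so a nontrivial rational identity of $D$ is perforce a rational identity of $M_m(F)$; but the first step makes $f$ a rational identity of $D$ and the second makes it nontrivial, so $f\in\mathcal{I}(D)=\mathcal{I}(M_m(F))$, contradicting the second step. Therefore some $a\in D^{(n)}$ has $\dim_FF(a)\ge m$, whence $\dim_FF(a)=m$ by Lemma~\ref{l3.3} and $F(a)$ is a maximal subfield of $D$. I expect the delicate point to be the second step: Lemma~\ref{l3.2} lets me prescribe the \emph{value} $C$ of $u_n$ but gives no control over the arguments $A_i$, so the non-vanishing of $g_{m-1}$ must be engineered entirely through $C$ --- which is precisely why I need both the surjectivity of $u_n$ onto the non-scalar matrices of $SL_m(F)$ (Lemma~\ref{l3.2}) and the degree-detection property of $g_{m-1}$ (Lemma~\ref{2.2}).
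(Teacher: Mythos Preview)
Your argument is correct and follows essentially the same route as the paper's proof: both reduce, via Lemma~\ref{l3.3} and Theorem~\ref{t2.2}, to showing that $g_{\bullet}(u_n(\,\cdot\,),\,\cdot\,)$ is a rational identity of $D$ that fails on $M_m(F)$, and both use Lemma~\ref{l3.2} to realize a matrix of degree $m$ over $F$ as a value of $u_n$. The only cosmetic differences are that the paper works with $g_\ell$ for $\ell=\max\dim_F F(u_n(\,\cdot\,))$ rather than $g_{m-1}$, uses the unipotent Jordan block $T$ instead of your diagonal $C$, and cites \cite{hai} for nontriviality whereas your explicit Vandermonde substitution establishes it directly; conversely, your detour through $M_{m-1}(D)$ to invoke Lemma~\ref{2.2} is unnecessary, since the implication $(1)\Rightarrow(2)$ there is the elementary alternating-multilinear argument valid in any $F$-algebra.
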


\begin{proof} {If $F$ is finite, then $D$ is also finite and we have nothing to prove. Suppose that $F$ is infinite and ${\rm dim}_F D=m^2$. By Lemma~\ref{l3.3}, it suffices to show that there exists $a\in D^{(n)}$ such that ${\rm dim}_FF(a)\ge m$. Indeed, put $$\ell={\rm max}\{\,{\rm dim}_FF(u_n(a_1,\dots,a_{2^n}))\mid a_1,\dots,a_{2^n}\in D^*\,\}.$$ Applying Lemma~\ref{2.2} we see that $g_\ell(u_n(a_1,\dots,a_{2^n}), r_1,\dots ,r_\ell )=0,$ for any $r_1,\dots , r_\ell\in D$ and $a_1,\dots,a_{2^n}\in D^*$. In other words, $$g_\ell(u_n(x_1,\dots,x_{2^n}), y_1,\dots, y_\ell)=0$$ is a rational identity of $D.$ It is not hard to verify that $g_\ell(u_n(x_1,\dots,x_{2^n}), y_1,\dots, y_\ell)$ is a non-zero element of $F(x_1,\dots,x_{2^n},y_1,\dots,y_\ell),$ (see~\cite[Theorem 3.4]{hai}). Hence, by Theorem~\ref{t2.2} it is also a rational identity of $M_n(F)$. This yields that $g_\ell(u_n(A_1,\dots,A_{2^n}),B_1,\dots,B_\ell)=0,$ for all matrices $A_i\in GL_m(F)$ and $B_i\in M_m(F).$ In the view of Lemma~\ref{2.2}, $u_n(A_1,\dots,A_{2^n})$ is algebraic over $F$ of degree $\le \ell$ for every $A_1,\dots,A_{2^n}\in M_m(F)$.		 Now consider the $m\times m$-matrix $T=(t_{ij})_{1\le i,j\le m}$ as follows: if $j=i$ or $j=i+1$, then $t_{ij}=1$; otherwise $t_{ij}=0$. It is easy to see that $T\in SL_m(F)$ and $T$ is algebraic of degree $m$ over $F$. By Lemma~\ref{l3.2}, one can find matrices $A_1,\dots,A_{2^n}\in SL_m(F)$ such that $u_n(A_1,\dots,A_{2^n})=T$. Hence, $\ell\ge m$. This completes the proof.}
\end{proof}
\section{Subfields generated by the elements in the $n$-th additive derived subgroup}	

Let $n$ be a positive integer and let $x_1,\dots,x_{2^n}$ be $2^n$ indeterminates. We define a polynomial  $v_n(x_1,\dots,x_{2^n})$ successively as follows: set $v_1(x_1,x_2)=[x_1,x_2]=x_1x_2-x_2x_1$. Assume that  $v_{n-1}(x_1,\dots,x_{2^{n-1}})$ is defined. Then we put $$v_n(x_1,\dots,x_{2^n})=v_1(v_{n-1}(x_1,\dots,x_{2^{n-1}}),v_{n-1}(x_{2^{n-1}+1},\dots,x_{2^n})).$$
\begin{lem}\label{l4.1} Let $R$ be an algebra with additive derived series $$R\supseteq R_1\supseteq \dots \supseteq R_n\supseteq \dots.$$ If $v_n$ is defined as above, then $v_n(a_1,\dots, a_{2^n})\in R_n$ for $a_1,\dots, a_{2^n}\in R$.
\end{lem}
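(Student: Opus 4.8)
The plan is to prove this by induction on $n$, in exact parallel with the proof of Lemma~\ref{l3.1} for the multiplicative derived series. Since the statement for a fixed $n$ is really quantified over all algebras $R$ (each carrying its canonical additive derived series), I would set up the induction hypothesis as: for every algebra $S$ one has $v_{n-1}(b_1,\dots,b_{2^{n-1}})\in S_{n-1}$ whenever $b_1,\dots,b_{2^{n-1}}\in S$.

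First I would dispose of the base case $n=1$. By the very definitions of $v_1$ and of the additive derived series, $v_1(a_1,a_2)=[a_1,a_2]=a_1a_2-a_2a_1\in[R,R]=R_1$, so the claim holds for $n=1$ and every algebra $R$.

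For the inductive step I would fix an algebra $R$ and elements $a_1,\dots,a_{2^n}\in R$, and write, using the definition of $v_n$, $v_n(a_1,\dots,a_{2^n})=[c,d]$ with $c=v_{n-1}(a_1,\dots,a_{2^{n-1}})$ and $d=v_{n-1}(a_{2^{n-1}+1},\dots,a_{2^n})$. Applying the induction hypothesis to $R$ itself gives $c,d\in R_{n-1}$. Since $R_n$ is by definition the additive subgroup $[R_{n-1},R_{n-1}]$ and $[c,d]$ is a single additive commutator of two elements of $R_{n-1}$, I would conclude $v_n(a_1,\dots,a_{2^n})=[c,d]\in R_n$, which closes the induction.

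I do not expect any real obstacle; this is a routine bookkeeping induction. The only point I would be slightly careful about is that, unlike the multiplicative setting where $G^{(n-1)}$ is again a group, the term $R_{n-1}$ of the additive derived series is merely an additive subgroup (in fact a Lie ideal) of $R$ rather than a subalgebra, so one should resist the temptation to ``recurse inside $R_{n-1}$''. This turns out to be unnecessary: applying the hypothesis to $R$ already places $c$ and $d$ in $R_{n-1}$, and then $[c,d]\in R_n$ follows immediately from the defining relation $R_n=[R_{n-1},R_{n-1}]$.
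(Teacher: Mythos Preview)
Your argument is correct and is exactly the induction the paper has in mind: the paper's own proof simply reads ``The proof is similar to that of Lemma~\ref{l3.1},'' and your write-up spells out that analogy. Your additional remark that one should apply the hypothesis to $R$ rather than to the additive subgroup $R_{n-1}$ is a fair clarification, but it does not change the approach.
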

\begin{proof}{The proof is similar to that of Lemma~\ref{l3.1}.}
\end{proof}

\begin{lem} \label{l4.2} Let $K$ be a field and $m>1$ such that ${\rm char}K\nmid m.$ For any positive integer $n$, and every matrix $C\in M_m(K)$ with zero-trace, there exist matrices $A_1, \dots , A_{2^{n}}\in M_m(K)$ whose trace is zero and $C=v_n(A_1,\dots , A_{2^{n}}).$
\end{lem}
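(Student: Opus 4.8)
The plan is to mirror the inductive skeleton of Lemma~\ref{l3.2}, replacing the multiplicative ingredients by their additive analogues. The base case $n=1$ will rest on the classical theorem of Albert and Muckenhoupt: over any field $K$, a matrix lies in $[M_m(K),M_m(K)]$ — i.e.\ equals $XY-YX$ for suitable $X,Y$ — if and only if its trace is zero. So, given $C\in M_m(K)$ with ${\rm tr}\,C=0$, I would first invoke that theorem to write $C=[X,Y]$, and then normalize the two factors to have zero trace. For this, observe that scalar matrices are central, so $[X-\lambda I,Y]=[X,Y]$ for every $\lambda\in K$; choosing $\lambda=m^{-1}\,{\rm tr}\,X$ — which is exactly where the hypothesis ${\rm char}\,K\nmid m$ is used, since $m$ must be invertible in $K$ — replaces $X$ by a zero-trace matrix without altering the commutator, and likewise for $Y$. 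This produces zero-trace $A_1,A_2$ with $C=[A_1,A_2]=v_1(A_1,A_2)$, settling the case $n=1$.

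For the inductive step, suppose the statement holds for $n-1$. Let $C\in M_m(K)$ have zero trace. By the case $n=1$ just proved, there are zero-trace matrices $B_1,B_2\in M_m(K)$ with $C=v_1(B_1,B_2)$. Applying the induction hypothesis to $B_1$ and to $B_2$ in turn gives zero-trace matrices $A_1,\dots,A_{2^{n-1}}$ with $B_1=v_{n-1}(A_1,\dots,A_{2^{n-1}})$ and zero-trace matrices $A_{2^{n-1}+1},\dots,A_{2^n}$ with $B_2=v_{n-1}(A_{2^{n-1}+1},\dots,A_{2^n})$. The recursive definition of $v_n$ then yields
$$C=v_1\bigl(v_{n-1}(A_1,\dots,A_{2^{n-1}}),\,v_{n-1}(A_{2^{n-1}+1},\dots,A_{2^n})\bigr)=v_n(A_1,\dots,A_{2^n}),$$
with all $2^n$ matrices of zero trace, which closes the induction.

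The only step carrying genuine content is the base case, and within it the normalization of the two commutator factors to zero trace; this is precisely where ${\rm char}\,K\nmid m$ is indispensable (it cannot simply be dropped: already for $m=2$ in characteristic $2$, the commutator $[X,Y]$ is a scalar matrix whenever ${\rm tr}\,X={\rm tr}\,Y=0$). Everything else is bookkeeping identical in shape to the proof of Lemma~\ref{l3.2}. Note also that, in contrast with the multiplicative setting of Lemma~\ref{l3.2}, there is no ``non-scalar'' condition to be propagated along the induction, because the only scalar matrix of trace zero is $0$ — again using ${\rm char}\,K\nmid m$ — and $0=v_n(0,\dots,0)$, so the degenerate case needs no separate treatment.
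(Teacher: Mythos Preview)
Your argument is correct and matches the paper's own proof essentially line for line: Albert--Muckenhoupt for the base case, followed by the trace-normalization $X\mapsto X-\frac{\mathrm{tr}(X)}{m}I_m$ (using ${\rm char}\,K\nmid m$), and then the same inductive unwinding as in Lemma~\ref{l3.2}. Your additional remarks on the necessity of the characteristic hypothesis and on why no ``non-scalar'' condition is needed are accurate but go slightly beyond what the paper records.
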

\begin{proof} {The idea of the proof is similar to that of Lemma~\ref{l3.2}. 	
	We prove the lemma by induction on $n$. Assume that $n=1$. In the view of \cite{Pa_AlMu_57}, which states that every matrix in $M_n(K)$ with zero-trace is a single additive commutator, there exist $A_1,A_2\in M_m(K)$ such that $C=A_1A_2-A_2A_1$. Set $B_1=A_1-\frac{tr(A_1)}{m}I_m$ and $B_2=A_2-\frac{tr(A_2)}{m}I_m,$ where by $tr(A)$ we mean the trace of $A.$ We have $C=B_1B_2-B_2B_1=v_1(B_1,B_2)$ and $tr(B_1)=tr(B_2)=0$. Hence, the statement holds in case $n=1$. The general case follows by induction on $n,$ similarly to the proof of Lemma~\ref{l3.2}.} 	
\end{proof}

The following result is the goal of this section. Note that the same result as Theorem~\ref{t3.4},  however,  follows from the case $n=1,$  proved  in~\cite{aa}  and~\cite{chebo},  since  by  a  theorem  of  Amitsur  and  Rowen $D_1=D_2=D_3=\dots~$\cite{amitsur}. However, the proof yields the slightly stronger claim that a single depth-$n$ iterated additive commutator would generate a maximal subfield, which does not follow from $D_1=D_2=D_3=\dots.$
\begin{thm}\label{t4.3} Let $D$ be a division ring finite dimensional over its center $F$ of characteristic either zero or a prime $p$ such that $p\nmid {\rm dim}_FD.$ For any positive integer $n$, there exists a depth-$n$ iterated additive commutator which generates a maximal subfield of $D.$
\end{thm}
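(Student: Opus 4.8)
The plan is to follow the template of Theorem~\ref{t3.4} verbatim, replacing the multiplicative iterated commutator $u_n$ by the additive one $v_n$, and replacing the role of $\mathrm{SL}_m$ by the zero-trace matrices $\mathfrak{sl}_m$. First I would dispose of the case $F$ finite (then $D$ is finite, hence a field, and there is nothing to prove) and set $\dim_F D = m^2$ with $F$ infinite. By Lemma~\ref{l3.3} it suffices to exhibit a depth-$n$ iterated additive commutator $a$ with $\dim_F F(a) \ge m$; such an $a$ then generates a maximal subfield. Define
$$\ell = \max\{\,\dim_F F(v_n(a_1,\dots,a_{2^n})) \mid a_1,\dots,a_{2^n}\in D\,\}.$$
By Lemma~\ref{l4.1}, any value $v_n(a_1,\dots,a_{2^n})$ lies in $D_n$, so any such $a$ is automatically a depth-$n$ iterated additive commutator; the only issue is the degree bound.

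Next, by maximality of $\ell$ and Lemma~\ref{2.2}, the rational expression $g_\ell(v_n(x_1,\dots,x_{2^n}), y_1,\dots,y_\ell)$ is a rational identity of $D$. One must check it is non-zero in the free field $F(x_1,\dots,x_{2^n},y_1,\dots,y_\ell)$: this is where I would invoke the same non-vanishing input used in the multiplicative case (see~\cite[Theorem 3.4]{hai}), noting that $v_n$ is a non-zero element of the free algebra and that $g_\ell$ composed with a non-scalar-valued expression does not collapse. Then Theorem~\ref{t2.2} upgrades this to a rational identity of $M_m(F)$, so $g_\ell(v_n(A_1,\dots,A_{2^n}),B_1,\dots,B_\ell)=0$ for all $A_i, B_i\in M_m(F)$, and Lemma~\ref{2.2} again tells us $v_n(A_1,\dots,A_{2^n})$ is algebraic over $F$ of degree $\le \ell$ for every choice of $A_i\in M_m(F)$.

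Finally I would produce a single zero-trace matrix that is a value of $v_n$ and is algebraic of degree exactly $m$. Since $\mathrm{char}\,F = 0$ or $p \nmid m$, the companion-type matrix $T$ can be shifted to have zero trace: take $T$ with $t_{ij}=1$ when $j=i+1$ (the nilpotent Jordan block, which already has trace $0$, or alternatively the trace-adjusted companion matrix of an irreducible degree-$m$ polynomial over $F$, which is possible because $F$ is infinite). In either case $T$ has zero trace and minimal polynomial of degree $m$. By Lemma~\ref{l4.2}, there exist zero-trace $A_1,\dots,A_{2^n}\in M_m(F)$ with $v_n(A_1,\dots,A_{2^n})=T$, forcing $\ell \ge m$. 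Hence some value $a = v_n(a_1,\dots,a_{2^n})$ with $a_i\in D$ has $\dim_F F(a) = m$, i.e. $F(a)$ is a maximal subfield, and $a\in D_n$ is a depth-$n$ iterated additive commutator, completing the proof.

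The main obstacle I anticipate is the non-vanishing of $g_\ell(v_n(x_1,\dots,x_{2^n}),y_1,\dots,y_\ell)$ in the free field — one needs to be careful that substituting the (multilinear, non-zero) free-algebra element $v_n$ into the $x$-slot of $g_\ell$ does not produce a rational identity for formal reasons. I expect this to follow from the cited degree-testing machinery of~\cite{Bo_BeMaMi_96} together with~\cite[Theorem 3.4]{hai} exactly as in the multiplicative case, since the relevant point — that $v_n$ takes non-algebraic-of-bounded-degree values on sufficiently large matrix rings — is insensitive to whether the innermost commutators are multiplicative or additive. A secondary (but routine) point is verifying the induction step in Lemma~\ref{l4.2} and the trace normalization, which is where the hypothesis $p\nmid \dim_F D$ is actually consumed.
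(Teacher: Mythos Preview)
Your proposal is correct and follows essentially the same approach as the paper: define $\ell$ as the maximal degree of a value of $v_n$, use Lemma~\ref{2.2} and Theorem~\ref{t2.2} to transfer the rational identity $g_\ell(v_n(\cdot),\cdot)=0$ to $M_m(F)$, then exhibit the nilpotent Jordan block $T$ (trace zero, minimal polynomial of degree $m$) as a value of $v_n$ via Lemma~\ref{l4.2} to force $\ell\ge m$. The paper's choice of $T$ is exactly your first option, $t_{i(i+1)}=1$ and $t_{ij}=0$ otherwise.

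One minor difference: the paper disposes of the characteristic-zero case separately at the outset, citing the Amitsur--Rowen result $D_1=D_2=\cdots$ together with the $n=1$ case from~\cite{aa}, and only runs the rational-identity argument for $\mathrm{char}\,F=p>0$. Your uniform treatment is equally valid (indeed, Lemma~\ref{l4.2} applies whenever $\mathrm{char}\,F\nmid m$, which is automatic in characteristic zero), and, as the paper itself remarks just before the theorem, the $D_1=D_n$ shortcut does not by itself yield the stronger ``single depth-$n$ iterated commutator'' conclusion --- so your uniform argument is arguably the cleaner route to the statement as written.
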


\begin{proof} {First note that if ${\rm char}F=0,$ then by a result due to Amitsur and Rowen~\cite{amitsur} we have $D_1=D_2=\cdots.$ Hence, in this case the result follows from~\cite[Theorem 7]{aa}. In the case of ${\rm char}F=p>0,$ the proof is similar to the one of Theorem~\ref{t3.4}. We assume that $F$ is infinite since if $F$ is finite, then $D$ is also finite and there is nothing to prove. Suppose ${\rm dim}_F D=m^2$. In the view of Lemma~\ref{l3.3}, it suffices to show that there exists $a\in D_n$ such that ${\rm dim}_FF(a)\ge m$. Indeed, put $$\ell={\rm max}\{\,{\rm dim}_FF(v_n(a_1,\dots,a_{2^n}))\mid a_1,\dots,a_{2^n}\in D^*\,\}.$$ According to Lemma~\ref{2.2} we see that $g_\ell(v_n(a_1,\dots,a_{2^n}), r_1,\dots ,r_\ell )=0,$ for any $r_1,\dots , r_\ell\in D$ and $a_1,\dots,a_{2^n}\in D^*$. In other words, $$g_\ell(v_n(x_1,\dots,x_{2^n}), y_1,\dots, y_\ell)=0$$ is a polynomial identity of $D$, so it is also a rational identity of $M_n(F)$ (Lemma~\ref{t2.2}). Note that it is easily seen that $g_\ell(v_n(x_1,\dots,x_{2^n}), y_1,\dots, y_\ell)$ is a non-zero element of $F(x_1,\dots,x_{2^n},y_1,\dots,y_\ell)$~(see~\cite[Theorem 3.4]{hai}). This yields that $$g_\ell(v_n(A_1,\dots,A_{2^n}),B_1,\dots,B_\ell)=0,$$ for all matrices $A_i, B_i\in M_m(F)$. According to Lemma~\ref{2.2}, $v_n(A_1,\dots,A_{2^n})$ is algebraic over $F$ of degree $\le \ell$ for every $A_1,\dots,A_{2^n}\in M_m(F)$.	Now consider the $m\times m$-matrix $T=(t_{ij})_{1\le i,j\le m}$ defined by $t_{i(i+1)}=1$ and $t_{ij}=0$ if $j\ne i+1$. We can show that $tr(T)=0$ and $T$ is algebraic of degree $m$ over $F$. By Lemma~\ref{l4.2}, there exist matrices $A_1,\dots,A_{2^n}\in M_m(F)$ such that $v_n(A_1,\dots,A_{2^n})=T$. Hence, $\ell\ge m$ and this completes the proof.}
\end{proof}

\hspace{-.5cm}\textbf{Acknowledgment:} The research of the first author was supported with ERC [grant number 320974]. The second author was funded by Vietnam National University HoChiMinh City (VNU-HCM) under grant no. C2018-18-03.

\end{document}